\newmdenv[linecolor=black,skipabove=\topsep,skipbelow=\topsep,
leftmargin=-3pt,rightmargin=-3pt,
innerleftmargin=6pt,innerrightmargin=6pt]{mybox}
 \def\tto{\;{\lower 1pt
		\hbox{$\rightarrow$}}\kern -10pt \hbox{\raise 2pt
		\hbox{$\rightarrow$}}\;}  
 \def\ra{\rangle} \def\la{\langle}
 \def\epsilon{\varepsilon} 
 \def\R{\Bbb R}
\def\t{\tau}
   \def\t{\tau}
\setlist[enumerate,1]{itemsep=0.0ex,parsep=0.5ex,label={\rm(\alph*)},leftmargin=*,
	align=left} \newcounter{lk}
\begin{document} 
	\begin{center}
		{\sc\bf Qualitative Properties of $k-$Center 
			Problems}\\[1ex]
		{\sc V. S. T. Long}\footnote{Faculty of Mathematics and Computer
			Science, University of Science, Ho Chi Minh City,
			Vietnam.}$^,$\footnote{Vietnam National University, Ho Chi Minh
			City, Vietnam (email: vstlong@hcmus.edu.vn).}, 
			 {\sc N. M.  Nam}\footnote{Fariborz Maseeh Department of Mathematics and Statistics,
			Portland State University, Portland, OR 97207, USA (email: mnn3@pdx.edu). Research of this author was partly supported by the USA National Science Foundation under grant DMS-2136228.},
			{\sc J. Sharkansky$^3$, }%\footnote{Fariborz Maseeh Department of Mathematics and Statistics,Portland State University, Portland, OR 97207, USA (email: jacshark@pdx.edu).}
		{\sc N. D. Yen}\footnote{Institute of Mathematics, Vietnam Academy of Science and Technology, 18 Hoang Quoc Viet,
			Hanoi {\color{green} 10072}, Vietnam (email: ndyen@math.ac.vn).}
	\end{center}
	\small{\bf Abstract.} In this paper, we study generalized versions of the $k$-center problem, which involves finding \(k\) circles of the smallest possible equal radius that cover a finite set of points in the plane. By utilizing the Minkowski gauge function, we extend this problem to generalized balls induced by various convex sets in finite dimensions, rather than limiting it to circles in the plane. First, we establish several fundamental properties of the global  optimal solutions to this problem. We then introduce the notion of local  optimal solutions and provide a sufficient condition for their existence. We also provide several illustrative examples to clarify the proposed problems.
	\\[1ex]
	{\bf Keywords.}  $k-$center 
		problem; global  optimal solution; local  optimal solution; DC
	programming; Minkowski gauge function.\\[1ex]
	\noindent {\bf AMS subject classifications.} 49J52; 68Q25; 90C26; 90C90; 90C31. 90C35

	\newtheorem{theorem}{Theorem}[section]
	\newtheorem{proposition}[theorem]{Proposition}
 \newtheorem{lemma}[theorem]{Lemma}
	\newtheorem{corollary}[theorem]{Corollary}
	\theoremstyle{definition}
	\newtheorem{remark}[theorem]{Remark}
	\newtheorem{definition}[theorem]{Definition}
	\newtheorem{example}[theorem]{Example}
	
	\normalsize
	\section{Introduction and Preliminaries}

Given a finite set $A = \{a_1, \ldots, a_m\}$ of distinct demand points (also referred to as \textit{target points}) in $\mathbb{R}^d$, the task is to find $k$ balls with centers $x_1, \ldots, x_k$ and the smallest possible radius $r \geq 0$ such that the union of these balls covers all the demand points. This optimization problem is known as the \emph{$k$-center problem}. The $k$-center problem is a generalization of the \emph{smallest enclosing circle problem} introduced in the 19th century by the English mathematician James Joseph Sylvester (1814--1897) \cite{syl}, which asks for a circle of the smallest radius enclosing a given finite set of points in the plane. The smallest enclosing circle problem has been investigated from both theoretical and numerical aspects; see \cite{Beck,Brimberg,Gunther1,Gunther2,mordukhovich2011applications,nam2014constructions} and the references therein. Nevertheless, the qualitative properties of the $k$-center problem have not been extensively studied in the literature, due to its complexity as a nonsmooth and nonconvex optimization problem.

Let us abbreviate the index set of the demand/target points (resp., of the centers/facilities) by $I$ (resp., by $J$), that is, $I = \{1, \ldots, m\}$ and $J = \{1, \ldots, k\}$, and assume that $1 \leq k \leq m$. If the centers $x_1,\ldots,x_k$ have been chosen, each demand point $a_i$ is assigned to one of the nearest centers, denoted by $x_{\ell_i}$, which can be found by solving the discrete optimization problem $\min\big\{\|x_\ell-a_i\|\mid \ell\in J\big\}.$ Then, the ball centered at $x_{\ell_i}$ with the radius 
$$r_i= \min\limits_{\ell\in J}\|x_\ell-a_i\|$$ 
covers the demand/target point $a_i$. Therefore, the smallest radius $r \geq 0$ of the balls centered at $x_1,\ldots,x_k$, which satisfies the requirement \textit{``the union of those balls covers all the demand points''}, is given by the formula $r = \max\limits_{i\in I} r_i$. Thus, the $k$-center problem can be formulated as the \textit{unconstrained continuous optimization problem}
\begin{eqnarray}\label{maxmin}
	\text{min}\left\{f_k(x) = \max_{i\in I}\,\left(\min_{\ell\in J}\|x_\ell-a_i\|\right)\mid x=(x_1, x_2, \ldots, x_k)\in (\mathbb{R}^d)^k = \mathbb{R}^{dk}\right\}.
\end{eqnarray}
Note that the Euclidean norm in problem~\eqref{maxmin} can be replaced by the squared Euclidean norm to induce smoothness while producing an equivalent problem.

 Given a nonempty compact convex set $F\subset \mathbb R^d$ such that  $0\in \mbox{\rm int}(F)$, where $0$ signifies the origin of $\mathbb{R}^d$, we recall that the {\em Minkowski gauge function} $\rho_F\colon \R^d \to [0,\infty)$ associated with $F$ is defined by  (see~\cite[Section~6.3]{mordukhovich2023easy}):
		\begin{equation}\label{gauge}
			\rho_F(x)=\inf\{t\geq 0\; |\; x\in tF\},\ \; x\in \R^d.
		\end{equation}
		Then a more general model of~\eqref{maxmin} can be formulated as
		\begin{eqnarray}\label{maxminn}
			\text{min}\left\{f_k^F(x)=\max_{i\in I}\left(\min_{\ell\in J}\rho_F(x_\ell-a_i)\right) \mid x=(x_1, x_2, \ldots, x_k)\in \R^{dk}\right\}.
		\end{eqnarray}
Note that when $F$ is the closed unit ball of 
  $\R^d$ with respect to the Euclidean norm, then~\eqref{maxminn} reduces to~\eqref{maxmin}. The use of the Minkowski gauge function instead of the Euclidean norm allows us to study the $k$-center problems with balls generated by different norms and even generalized balls generated by convex sets.

With the presence of the ``min''  operator, the objective functions $f$ and $f_F$ 
 of problem~\eqref{maxmin} and problem~\eqref{maxminn}, respectively, are nonconvex and nonsmooth in general. This makes the problems challenging for available optimization techniques. The obvious representation
	\begin{equation*}\label{dc}
		\min_{\ell\in J}\rho_F(x_\ell-a_i)=\sum_{r\in J} \rho_F(x_r-a_i)- \max_{\ell\in J} \sum_{r\in J\setminus \{\ell\}}\rho_F(x_r-a_i)
	\end{equation*}
shows that the function $x=(x_1, x_2, \ldots, x_k)\mapsto \min_{\ell\in J}\rho_F(x_\ell-a_i)$ is the difference of two convex functions on $\R^{dk}$. Since the maximum of finitely many DC functions (\textbf{D}ifference-of-\textbf{C}onvex functions)  is a DC function (see, e.g.,~\cite[Proposition~4.1]{tuy2016convex}), we see that~\eqref{maxmin} and \eqref{maxminn} belong to the class of DC programming problems.

There are other well-known models of multifacility location, called \textit{the multi-source Weber problem} and \textit{the minimum sum-of-squares clustering problem}, whose aim is to seek $k$ centers/facilities $x_1,x_2, \ldots,x_k$ to serve~$m$ demand/target points $a_1,\ldots,a_m$ in $\R^d$ by assigning each of them to its nearest center and minimizing the sum of the weighted minima of the Euclidean distances or the squared Euclidean distances 
of the demand points to the facilities. Practical applications of these multifacility location problems include finding locations to place $k$ distribution centers to deliver supplies to $m$ demand points so that the total transportation cost is minimized. Recently, fundamental qualitative properties of the models have been established in \cite{cuongnca,cuongoptimization,cuong_jnca,cuong2020qualitative}.

Most of the existing numerical methods for solving problem~\eqref{maxmin}, which involves $k$-centers of Euclidean balls, rely on heuristic approaches (see~\cite{Brimberg,mordukhovich2019fermat,nam2014constructions} and the references therein). These methods often struggle to deliver optimal solutions in the vast majority of cases. The first significant effort to develop non-heuristic optimization algorithms, based on minimizing DC (Difference of Convex) functions, was initiated in \cite{ANQ}. However, comprehensive qualitative studies of both problem~\eqref{maxmin} and its generalization in~\eqref{maxminn} remain largely unexplored. The present paper aims to uncover some fundamental properties of the global and local  optimal solution sets of the $k$-center problem~\eqref{maxmin} and its more general model~\eqref{maxminn}.
	
The organization of the paper is as follows. Section~2 presents a global solution existence theorem and discusses the properties of the global  optimal solution set for problem~\eqref{maxminn}. In Section~3, we establish a sufficient condition for the existence of local  optimal solutions to problem~\eqref{maxminn}. Several concluding remarks are provided in Section~4.

Throughout this paper, the set of positive integers is denoted by~$\mathbb{N}$. The Euclidean space $\mathbb{R}^d$, where $d \in \mathbb{N}$, is equipped with the inner product $\langle x, y \rangle = \sum_{i=1}^d x_i y_i$ and the norm $\|x\| = \left(\sum_{i=1}^d x_i^2\right)^{1/2}$, where $x = (x_1, \ldots, x_d)$ and $y = (y_1, \ldots, y_d)$. The closed Euclidean ball  with center $a \in \mathbb{R}^d$ and radius $r \geq  0$ is denoted by $\mathbb B[a, r]$. Given a subset $K$ of $\R^d$, define $\|K\|=\sup\{\|x\|\; |\; x\in K\}$. We denote the sets of all optimal solutions to problem~\eqref{maxmin} and problem~\eqref{maxminn} by $S_k$ and $S_k^F$, respectively. 
	
	\section{Global Optimal Solutions}
	In this section, we study the existence of global optimal solutions and some properties to the global  optimal solution sets for the $k$-center problem~\eqref{maxmin} and the more general model~\eqref{maxminn}.

Although the following lemma is a known result, we provide a detailed proof for the reader's convenience.

	\begin{lemma}\label{lemma1} {\rm (See~\cite[Lemma 4.1]{longoptimletter})}
Let $F \subset \mathbb{R}^d$ be a nonempty compact convex set satisfying the condition $0 \in \mbox{\rm int}(F)$. Then, for any $x \in \mathbb{R}^d$ we have
\begin{equation}\label{estimate_1} 
\rho_F(x) \leq \|F\| \|F^{\circ}\| \rho_F(-x),
\end{equation}
where $F^{\circ} = \{x^* \in \mathbb{R}^d \mid \langle x^*, x \rangle \leq 1 \; \mbox{\rm for all } x \in F\}$.
\end{lemma}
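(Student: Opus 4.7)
The plan is to bracket $\rho_F(\pm x)$ between multiples of the Euclidean norm $\|x\|$, using $\|F\|$ on one side and $\|F^\circ\|$ on the other; combining the two bracketing estimates gives the desired inequality.

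First, I would derive the lower bound $\rho_F(-x) \geq \|x\|/\|F\|$. Since $F$ is compact (hence closed) and $0 \in \mbox{\rm int}(F)$, the infimum in the definition~\eqref{gauge} is attained: if we set $t = \rho_F(-x)$, then $-x \in tF$, so there exists $y \in F$ with $-x = ty$. Taking Euclidean norms, $\|x\| = t\|y\| \leq t\|F\| = \rho_F(-x)\|F\|$. This step only uses the definition of $\rho_F$ and of $\|F\|$, so it is elementary.

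Next, I would establish the companion upper bound $\rho_F(x) \leq \|F^\circ\|\,\|x\|$. The natural route is through the well-known duality identity
\begin{equation*}
\rho_F(x) = \sup_{x^* \in F^\circ} \langle x^*, x \rangle,
\end{equation*}
which holds precisely because $F$ is a compact convex set with $0 \in \mbox{\rm int}(F)$, so that the bipolar theorem gives $F^{\circ\circ} = F$ and hence $x \in tF$ iff $\langle x^*, x\rangle \leq t$ for every $x^* \in F^\circ$. Once this identity is in hand, the Cauchy--Schwarz inequality yields $\langle x^*, x \rangle \leq \|x^*\|\,\|x\| \leq \|F^\circ\|\,\|x\|$ for every $x^* \in F^\circ$, and taking the supremum over $F^\circ$ gives $\rho_F(x) \leq \|F^\circ\|\,\|x\|$.

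Finally, chaining the two bounds together produces
\begin{equation*}
\rho_F(x) \;\leq\; \|F^\circ\|\,\|x\| \;=\; \|F^\circ\|\,\|-x\| \;\leq\; \|F^\circ\|\,\|F\|\,\rho_F(-x),
\end{equation*}
which is exactly~\eqref{estimate_1}. The only nontrivial ingredient is the duality formula $\rho_F = \sigma_{F^\circ}$; this is the one point where I would either cite a standard reference (e.g., Rockafellar or the book cited in the introduction) or, if a self-contained argument is preferred, invoke the bipolar theorem, noting that $0 \in \mbox{\rm int}(F)$ together with compactness guarantees that $F^\circ$ is itself compact with $0$ in its interior, so $\|F^\circ\|$ is finite and positive and the identity $F^{\circ\circ} = F$ applies.
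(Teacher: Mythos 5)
Your proof is correct and follows essentially the same route as the paper: both rest on the two-sided comparison $\rho_F(x)/\|F^{\circ}\| \le \|x\| \le \|F\|\,\rho_F(x)$ and the identical chain $\rho_F(x)\le\|F^{\circ}\|\,\|x\|=\|F^{\circ}\|\,\|-x\|\le\|F\|\,\|F^{\circ}\|\,\rho_F(-x)$. The only difference is that the paper cites \cite[Proposition~2.1(c)]{CW_JOGO2004} for these norm--gauge estimates, whereas you derive them yourself (the lower one directly from the definition of $\rho_F$ and $\|F\|$, the upper one via $\rho_F(x)=\sup_{x^*\in F^{\circ}}\langle x^*,x\rangle$ and Cauchy--Schwarz), which is a valid, slightly more self-contained justification of the same steps.
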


	\begin{proof} For any $x\in \R^d$, by~\cite[Proposition~2.1(c)]{CW_JOGO2004} we have $$\dfrac{\rho_F (x)}{\|F^{\circ}\|}\leq \|x\|\leq\|F\|\rho_F(x).$$ Consequently,
	$$\rho_F (x)\leq \|F^{\circ}\|\|x\|=\|F^{\circ}\|\|-x\|\leq \|F\|\|F^{\circ}\|\rho_F(-x).$$ Thus, the estimate~\eqref{estimate_1} holds.	
	\end{proof}

	The following theorem generalizes~\cite[Proposition~2.2]{mordukhovich2019fermat}. To obtain it, we will use some arguments similar to those in the proofs of global  optimal solution existence in~\cite[Theorem~4.2]{ANQ} and \cite[Theorem~2.4]{cuong2020qualitative}.

	\begin{theorem}\label{thm_global}
		The global optimal solution set $S_k^F$ of problem~\eqref{maxminn} is nonempty and closed.
	\end{theorem}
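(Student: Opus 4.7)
The plan is to establish nonemptiness by extracting a bounded minimizing sequence via a pigeonhole argument on nearest-center assignments, and then to invoke continuity of $f_k^F$ to obtain a minimizer; closedness will then follow immediately.

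First I would observe that $f_k^F$ is continuous on $\mathbb{R}^{dk}$: since $0 \in \mbox{\rm int}(F)$, the gauge $\rho_F$ is a finite sublinear functional on $\mathbb{R}^d$, hence continuous, and $f_k^F$ is a finite composition of max and min of continuous functions. The infimum $m^* := \inf f_k^F \in [0,\infty)$ is then finite, so I would pick a minimizing sequence $x^n = (x_1^n,\dots,x_k^n)$ with $f_k^F(x^n) \to m^*$.

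The main obstacle is that $f_k^F$ is not coercive: sending any strict subset of the centers to infinity leaves the value finite, since each $a_i$ is only scored against its nearest center. To bypass this, I would use a pigeonhole argument on the assignment map. For each $n$ and each $i\in I$, let $\ell_i^n\in J$ attain the inner minimum $\min_{\ell}\rho_F(x_\ell^n - a_i)$. Since assignment vectors lie in the finite set $J^m$, after passing to a subsequence I may assume $\ell_i^n = \ell_i$ is independent of $n$. Using the norm-equivalence inequality $\|x\|\leq \|F\|\rho_F(x)$ recorded in the proof of Lemma~\ref{lemma1}, for each $i\in I$,
$$\|x_{\ell_i}^n - a_i\|\;\leq\;\|F\|\rho_F(x_{\ell_i}^n - a_i)\;\leq\;\|F\| f_k^F(x^n),$$
so $(x_{\ell_i}^n)_n$ is bounded. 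Letting $J' = \{\ell_i : i\in I\}\subseteq J$ and replacing $x_\ell^n$ for $\ell\in J\setminus J'$ by any fixed point (say $a_1$) yields a modified sequence $\tilde x^n$; since for each $i$ the inner minimum can only decrease or stay the same under this modification (the witness $\tilde x_{\ell_i}^n = x_{\ell_i}^n$ is preserved), one has $f_k^F(\tilde x^n) \leq f_k^F(x^n)$, so $(\tilde x^n)$ is a bounded minimizing sequence.

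To conclude, the Bolzano--Weierstrass theorem yields a subsequence $\tilde x^n \to \bar x\in\mathbb{R}^{dk}$, and continuity of $f_k^F$ gives $f_k^F(\bar x) = m^*$, so $\bar x\in S_k^F$ and $S_k^F\neq \emptyset$. For closedness I would simply note that $S_k^F = (f_k^F)^{-1}(\{m^*\})$ is the preimage of a closed singleton under the continuous function $f_k^F$, hence closed.
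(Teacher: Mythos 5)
Your proof is correct, but it takes a genuinely different route from the paper's. You argue via a minimizing sequence: pigeonhole on the finite set $J^m$ of nearest-center assignments to fix the assignment map along a subsequence, bound the ``used'' centers through $\|x\|\le\|F\|\rho_F(x)$ together with $\rho_F(x^n_{\ell_i}-a_i)\le f_k^F(x^n)$, park the unused centers at a demand point (harmless, since each witness of the inner minimum is untouched), and finish with Bolzano--Weierstrass and continuity. The paper instead compactifies first: it fixes $i_0\in I$, applies Weierstrass to the constrained problem over the generalized ball $\mathbb B_F\left[a_{i_0},(1+\|F\|\|F^\circ\|)\rho\right]$, and then shows that an arbitrary point can be (weakly) improved by moving every center far from $a_{i_0}$ to $a_{i_0}$; that pointwise replacement needs the asymmetry estimate of Lemma~\ref{lemma1} and the subadditivity of $\rho_F$. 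Your replacement step is simpler---you only discard centers that attract no demand point, so no triangle-inequality or Lemma~\ref{lemma1} bookkeeping is required---which makes the existence argument more elementary. What the paper's construction buys in exchange is an explicit a priori localization of a global solution inside a concrete compact generalized ball, and the compactness of such balls established there is reused later (for instance in the proof of Theorem~\ref{one_center}). Your closedness argument (preimage of the closed singleton $\{m^*\}$, equivalently a sublevel set, under the continuous $f_k^F$) is the same as the paper's.
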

	\begin{proof}
		Take any $i_0 \in I$ and put 
		\begin{equation}\label{eqrho}
         \rho=\max\limits_{i \in I}\rho_F(a_{i_0}-a_{i}).
		\end{equation}
		For every $\gamma\geq 0$, let
		\begin{equation}\label{ball_F}
\mathbb B_F[a_{i_0},\gamma]=\big\{y\in \mathbb R^d\mid \rho_F(y-a_{i_0})\leq \gamma\big\}.
		\end{equation} 
		Since $\rho_F$ is Lipschitz continuous (see~\cite[Proposition~6.18]{mordukhovich2023easy}), the function $$f_k^F(x)=\max_{i\in I}\, \min_{\ell\in J}\rho_F(x_\ell-a_i)$$ is also Lipschitz continuous. The set $\mathbb B_F[a_{i_0},\gamma]$, which contains $a_{i_0}$, is bounded for every $\gamma\geq 0$. Indeed, since $F$ is a bounded set, there exists $\alpha>0$ such that $F\subset \mathbb B[0,\alpha]$. Therefore, taking any $y\in\mathbb B_F[a_{i_0},\gamma]$, by~\eqref{gauge} and~\eqref{ball_F} we have
		$$\begin{array}{rcl}
		\gamma\geq \rho_F(y-a_{i_0}) & = &\inf\{t\geq 0\; |\; y-a_{i_0}\in tF\}\\
			& \geq & \inf\{t\geq 0\; |\; y-a_{i_0}\in t\mathbb B[0,\alpha]\}\\
			& = & \alpha^{-1} \|y-a_{i_0}\|.
		\end{array}$$ It follows that $\mathbb B_F[a_{i_0},\gamma]\subset \mathbb B[a_{i_0},\alpha\gamma]$. Thus, $\mathbb B_F[a_{i_0},\gamma]$ is bounded. In addition, from~\eqref{ball_F} 
	 we deduce that $\mathbb B_F[a_{i_0},\gamma]$ is closed as it is a sublevel set of a continuous function. Hence, $\mathbb B_F[a_{i_0},\gamma]$ is compact.  By the classical Weierstrass theorem (see, e.g., \cite[Theorem 7.9(i)]{mordukhovich2023easy}), we can infer that the optimization problem
		\begin{equation}\label{compactification}
		\min \left\{f_k^F(x) \mid x=\left(x_1, \ldots, x_k\right) \in \mathbb{R}^{d k},\; x_\ell \in \mathbb B_F\left[a_{i_0}, (1+\|F\|\|F^\circ\|) \rho\right],\; \forall \ell \in J\right\},
	    \end{equation} 
		admits a global  optimal solution $\bar{x}=\left(\bar{x}_1, \ldots, \bar{x}_k\right) \in \mathbb{R}^{d k}$ satisfying $$\rho_F(\bar{x}_\ell-a_{i_0}) \leq (1+\|F\|\|F^\circ\|) \rho\textrm{ for all }\ell \in J.$$ Given any $x=\left(x_1, \ldots, x_k\right) \in \mathbb{R}^{d k}$, we distinguish two situations: 
		
		\hskip0.5cm (a) $\rho_F(x_\ell-a_{i_0}) \leq (1+\|F\|\|F^\circ\|) \rho$ for all $\ell \in J$;
		
		\hskip0.5cm (b) $\rho_F(x_{ \ell}-a_{i_0}) > (1+\|F\|\|F^\circ\|) \rho$ for some $\ell \in J$. 
		
		In situation~(a), we have $f_k^F(\bar{x}) \leq f_k^F(x)$ because $x$ is a feasible point of the optimization problem~\eqref{compactification}. In situation~(b), denote the set of $\ell\in J$ with $\rho_F(x_{ \ell}-a_{i_0}) > (1+\|F\|\|F^\circ\|) \rho$ by~$L$. Let $\tilde{x}=\left(\tilde{x}_1, \ldots, \tilde{x}_k\right) \in \mathbb{R}^{d k}$ be the vector defined by setting $\tilde{x}_\ell=a_{i_0}$ for $\ell \in L$ and  $\tilde{x}_\ell=x_\ell$ for $\ell \in J \backslash L$. Fixing any $i \in I$,  one has  for every $\ell \in L$ the following:
		$$\begin{array}{ll}
        \rho_F(\tilde{x}_\ell-a_i)=\rho_F(a_{i_0}-a_i) \leq \rho &=(1+\|F\|\|F^\circ\|)\rho- \|F\|\|F^\circ\|\rho\\
        & <\rho_F(x_\ell-a_{i_0})- \|F\|\|F^\circ\|\rho\\
        & \leq \rho_F(x_\ell-a_{i_0})- \|F\|\|F^\circ\|\rho_F(a_{i_0}-a_i) \quad \text{(by \eqref{eqrho})} \\
        & \leq \rho_F(x_\ell-a_{i_0})-\rho_F(a_i-a_{i_0})\quad \text{(by~\eqref{estimate_1})}\\
        &\leq\rho_F(x_\ell-a_i),
		\end{array}$$ where the last estimate is valid by \cite[Theorem 6.14]{mordukhovich2023easy}. Meanwhile, one has $$\rho_F(\tilde{x}_\ell-a_i)=\rho_F(x_\ell-a_i)\; \text{ for all }\, \ell\in J\backslash L.$$ Thus, 
    	\begin{equation}\label{leq1}
        f_k^F(\tilde{x})=\max_{i\in I}\, \min_{\ell\in J}\rho_F(\tilde{x}_\ell-a_i)\leq \max_{i\in I}\, \min_{\ell\in J}\rho_F(x_\ell-a_i)=f_k^F(x).
    	\end{equation}
		Moreover, since $\tilde x_\ell\in \mathbb B_F\left[a_{i_0}, (1+\|F\|\|F^\circ\|) \rho\right]\text{ for all } \ell \in J$, one has $f_k^F(\bar x)\leq f_k^F(\tilde{x})$. Combining this with~\eqref{leq1} yields $f_k^F(\bar x)\leq f_k^F(x)$ for every $x=\left(x_1, \ldots, x_k\right) \in \mathbb{R}^{d k}$; hence $\bar x\in S^F_k$. We have thus proved that the global  optimal solution set of~\eqref{maxminn} is nonempty.

Clearly, $S_k^F=\{x\in\mathbb R^d\mid f_k^F(x)\leq\bar\alpha\}$, where $\bar\alpha$ denotes the optimal value of  problem~\eqref{max_one_center}. 
	So, by the continuity of $f_k^F(\cdot)$ we can assert that $S_k^F$ is a closed subset of $\mathbb R^d$. This completes the proof.
	\end{proof}

		%%%%%%%%%%%%%%%%%%%%%%%%%%%%%%%%%%%%%%%%%%%%%%%%%
	
	 The next corollary is a direct consequence of Theorem~\ref{thm_global} for the case where $F$ is the closed unit ball in $\R^d$.
	\begin{corollary}\label{theo2.4}
		The global optimal solution set $S_k$ of problem~\eqref{maxmin} is nonempty and closed.
	\end{corollary}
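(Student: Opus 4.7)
The plan is to derive Corollary~\ref{theo2.4} directly from Theorem~\ref{thm_global} by specializing the convex body $F$ to the closed unit Euclidean ball $\mathbb B[0,1]$ in $\mathbb R^d$. First, I would verify that $F = \mathbb B[0,1]$ satisfies the standing hypotheses of the general framework: it is nonempty, compact, convex, and $0 \in \mbox{int}(F)$. These are all immediate from the definition of the Euclidean ball.

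Next, I would identify the Minkowski gauge of this particular $F$ with the Euclidean norm. From the definition in~\eqref{gauge}, for any $x \in \mathbb R^d$ one has
\begin{equation*}
\rho_F(x) = \inf\{ t \geq 0 \mid x \in t\mathbb B[0,1]\} = \inf\{ t \geq 0 \mid \|x\| \leq t\} = \|x\|.
\end{equation*}
Substituting this identity into the definition of $f_k^F$ in problem~\eqref{maxminn} yields exactly the objective $f_k$ of problem~\eqref{maxmin}. Consequently, the two problems coincide, and their global optimal solution sets agree, i.e.\ $S_k^F = S_k$.

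Finally, I would invoke Theorem~\ref{thm_global}, which asserts that $S_k^F$ is nonempty and closed for every admissible $F$. Applied to the specialization above, this immediately gives that $S_k$ is nonempty and closed, completing the proof.

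I do not expect any genuine obstacle here, since the argument is simply a specialization of an already-established general result. The only minor points worth mentioning explicitly are the self-duality of the Euclidean ball (so that $\|F\| = \|F^\circ\| = 1$ and the constants appearing in~\eqref{estimate_1} and in the bound $(1+\|F\|\|F^\circ\|)\rho$ simplify cleanly), and that the reduction $\rho_F = \|\cdot\|$ is needed to identify $f_k^F$ with $f_k$. Neither of these steps requires additional technical work.
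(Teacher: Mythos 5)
Your argument is correct and is exactly the paper's route: the corollary is obtained by specializing Theorem~\ref{thm_global} to $F=\mathbb B[0,1]$, noting that $\rho_F=\|\cdot\|$ so that problem~\eqref{maxminn} reduces to problem~\eqref{maxmin} and $S_k^F=S_k$. The extra remarks about self-duality of the Euclidean ball are fine but not needed.
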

	
A natural question arises: \textit{Are the global  optimal solution sets in Theorem~\ref{thm_global} (resp., in Corollary~\ref{theo2.4}) compact?} The affirmative answer to this question for one-center problems is given in the next theorem.

\begin{theorem}\label{one_center}
	For $k=1$, the global optimal solution set  $S_1^F$ is a nonempty compact convex subset of $\mathbb B_F\left[a_{i_0},\rho\right]$ defined in~\eqref{ball_F}, where~$\rho$ is defined by~\eqref{eqrho}.
\end{theorem}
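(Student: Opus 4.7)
The plan is to handle the four claims — nonemptiness, closedness, inclusion in $\mathbb{B}_F[a_{i_0},\rho]$, and convexity — in a natural order, leveraging the previous theorem and the convexity of the Minkowski gauge function.

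First, nonemptiness and closedness are immediate consequences of Theorem~\ref{thm_global} applied with $k=1$, so no additional work is needed there. Next, for the crucial inclusion $S_1^F \subset \mathbb{B}_F[a_{i_0},\rho]$, I would note that the point $a_{i_0}$ itself is a feasible candidate (with single center $x_1=a_{i_0}$), and by the definition~\eqref{eqrho} of $\rho$ we have $f_1^F(a_{i_0}) = \max_{i\in I}\rho_F(a_{i_0}-a_i) = \rho$. Hence for any $\bar{x}\in S_1^F$, identifying $\bar{x}$ with its single component $\bar{x}_1$, we obtain
\begin{equation*}
\rho_F(\bar{x}-a_{i_0}) \;\leq\; \max_{i\in I}\rho_F(\bar{x}-a_i) \;=\; f_1^F(\bar{x}) \;\leq\; f_1^F(a_{i_0}) \;=\; \rho,
\end{equation*}
which places $\bar{x}$ in $\mathbb{B}_F[a_{i_0},\rho]$.

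Compactness of $S_1^F$ then follows from two facts already established: $S_1^F$ is closed (by Theorem~\ref{thm_global}), and $\mathbb{B}_F[a_{i_0},\rho]$ is bounded by the argument used in the proof of Theorem~\ref{thm_global} (where one shows $\mathbb{B}_F[a_{i_0},\gamma]\subset \mathbb{B}[a_{i_0},\alpha\gamma]$ using that $F\subset \mathbb{B}[0,\alpha]$ for some $\alpha>0$). A closed subset of a bounded set in $\mathbb{R}^d$ is compact.

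For convexity, the idea is that $f_1^F$ is itself a convex function. Indeed, since $F$ is convex with $0\in\mbox{\rm int}(F)$, the gauge $\rho_F$ is a sublinear (hence convex) function on $\mathbb{R}^d$; therefore each map $x\mapsto \rho_F(x-a_i)$ is convex, and $f_1^F$, as the pointwise maximum of finitely many convex functions, is convex. The set of minimizers of a convex function is convex, so $S_1^F$ is convex. I do not anticipate a real obstacle in this proof: the only step requiring a bit of care is recognizing that the bound $f_1^F(\bar x)\le f_1^F(a_{i_0})=\rho$ is exactly what forces $\bar x\in \mathbb{B}_F[a_{i_0},\rho]$, which simultaneously delivers the desired inclusion and, via closedness, the compactness.
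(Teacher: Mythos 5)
Your proof is correct and follows essentially the same route as the paper: convexity of $f_1^F$ via sublinearity of $\rho_F$, the bound $f_1^F(\bar x)\le f_1^F(a_{i_0})=\rho$ to get the inclusion in $\mathbb B_F[a_{i_0},\rho]$ (the paper phrases this as a contradiction, you argue directly, which is an immaterial difference), and closedness from Theorem~\ref{thm_global} combined with boundedness of $\mathbb B_F[a_{i_0},\rho]$ for compactness.
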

\begin{proof} Observe that the set $S_1^F$ is nonempty by Theorem~\ref{thm_global}. Since $k=1$ by our assumption, $S_1^F$ is the optimal solution set of the optimization problem   
\begin{eqnarray}\label{max_one_center}
	\min\left\{f_1^F(x)=\max_{i\in I}\rho_F(x-a_i) \mid x\in \R^{d}\right\}.
\end{eqnarray} For  every $i\in I$, as $\rho_F(\cdot)$ is a positively homogeneous and subadditive function by~\cite[Theorem~6.14]{mordukhovich2023easy}, it is easy to show that $\rho_F(.-a_i)$ is a convex function. Hence, $$f_1^F(x)=\max\limits_{i\in I}\rho_F(x-a_i)$$ is also a convex function. This means that \eqref{max_one_center} is a convex optimization problem. Therefore, $S_1^F$ is a convex set.

Now,  fixing any $i_0 \in I$, we will show that 
\begin{equation}\label{S_F}
S_1^F\subset\mathbb B_F\left[a_{i_0},\rho\right].
\end{equation}  If $\hat x\in S_1^F$ and  $\hat x\notin \mathbb B_F\left[a_{i_0},\rho\right]$, then
\begin{equation}\label{hat_x}
	\rho_F(\hat x-a_{i_0})>\rho.
\end{equation} 
It follows from~\eqref{eqrho} and ~\eqref{hat_x} that
$$f_1^F(\hat x)=\max\limits_{i\in I}\rho_F(\hat x-a_i)>\rho=\max\limits_{i \in I}\rho_F(a_{i_0}-a_{i})=f_1^F(a_{i_0}),$$ contradicting the assumption that $\hat x$ is a global  optimal solution to  problem~\eqref{max_one_center}. We have thus proved that~\eqref{S_F} holds. Note that by Theorem~\ref{thm_global}, $S^F_k$ is closed. Combining this with~\eqref{S_F} and the compactness of $\mathbb B_F\left[a_{i_0},\rho\right]$ (see the proof of Theorem~\ref{thm_global}) yields that $S_1^F$ is a compact subset of $\mathbb R^d$.  So, we get the desired result.
\end{proof}

In the case where $k > 2$, the solution set $S_k^F$ can be either compact or noncompact. 
To justify this claim, we introduce the concept of an attraction set, which is similar to the one defined by Cuong et al.~\cite{cuong2020qualitative} for the minimum sum-of-squares clustering problem. Recently, the notion of an attraction set has been extensively used in~\cite{cuongnca,cuongoptimization} for studying the Multi-Source Weber Problem.

Consider  problem~\eqref{maxminn} and let $x=\left(x_1, \ldots, x_k\right) \in \mathbb{R}^{d k}$ be an arbitrary system of centers. We inductively construct~$k$ subsets $A_1, \ldots, A_k$ of the set of target points $A$ in the following way. Put $A_0=\emptyset$ and
$$
A_\ell=\left\{a_i \in A \backslash\left(\bigcup_{q=0}^{\ell-1} A_q\right) \mid \rho_F(x_\ell-a_i)=\min _{r\in J}\rho_F(x_r-a_i)\right\}\quad \big(\ell\in J=\{1,\ldots,k\}\big).$$ This family $\{A_1,\ldots, A_k\}$ is said to be \textit{the natural clustering} associated with $ x$.

\begin{definition}
	Let $x=\left(x_1, \ldots, x_k\right) \in \mathbb{R}^{d k}$ be a centers system. The \textit{attraction set} of a component $x_\ell$ of $x$ is the set 
	\begin{equation}\label{attraction}
		A[x_\ell]=\left\{a_i \in A \mid\rho_F(x_\ell-a_i)=\min _{r\in J}\rho_F(x_r-a_i)\right\}.
	\end{equation} If $A[x_\ell]$ is nonempty, then we say that the center $x_\ell$ is 
	\textit{attractive} with respect to the set $A$ of demand points.
\end{definition}

Clearly, one has
$A_\ell=A[ x_\ell]\setminus\left(\bigcup_{q=1}^{\ell-1}A_q\right)$ for every $\ell\in J$.

We are now prepared to show by an example that in the case $k=3$, the global optimal solution set $S^F_3$ is not necessarily compact. 

\begin{example} \label{S-noncompact} Consider problem~\eqref{maxmin} with $d=2, m=4, k=3$, and $$A=\left\{a_1=(0,0),\; a_2=(1,0),\; a_3=(0,1),\;  a_4=(1,1)\right\}.$$
Clearly, for all \(i_1, i_2 \in I\) with \(i_1 \neq i_2\), we have
\begin{equation}\label{key11}
\left\|a_{i_1} - a_{i_2}\right\|\geq 1.
\end{equation}
Suppose for a while there exists \(x=(x_1,x_2,x_3) \in \mathbb{R}^{2}\times\mathbb{R}^{2}\times \mathbb{R}^{2}\) with 
\begin{equation}\label{key12}
f_3(x) = \alpha<\frac{1}{2}, 
\end{equation} where $f_3(x)=\max\limits_{i\in I}\,\left(\min\limits_{\ell\in J}\|x_\ell-a_i\|\right)$. Since $k<m$ and $A=\bigcup\limits_{\ell=1}^k A[ x_\ell]$, there must exist $\bar\ell\in J$ such that the attraction set $A[x_{\bar\ell}]$ defined in accordance with~\eqref{attraction} contains at least two demand points. So, there are distinct indexes $\bar i_1,\bar i_2\in I$ with $a_{\bar i_1},a_{\bar i_2}\in A[x_{\bar\ell}]$. Changing the roles of $\bar i_1$ and $\bar i_2$ (if necessary), we can assume that  $\left\|x_{\bar\ell}-a_{\bar i_2}\right\|\leq\left\|x_{\bar\ell}-a_{\bar i_1}\right\|$. Thus, \begin{equation}\label{key12a}\left\|x_{\bar\ell}-a_{\bar i_2}\right\|\leq\left\|x_{\bar\ell}-a_{\bar i_1}\right\| =\min_{\ell \in J}\left\|x_\ell-a_{\bar i_1}\right\|\leq \alpha.
\end{equation}
Let \(i_1 \in I\) be given arbitrarily and let $i_2 \in I\setminus \{i_1\}$. Select an index $\ell_1\in J$ satisfying  $a_{i_1}\in A[x_{\ell_1}]$. Then, from~\eqref{key12} it follows that
\begin{equation}\label{key13}
	\left\|x_{\ell_1} - a_{i_1}\right\|=\min\limits_{\ell\in J}\|x_\ell-a_{i_1}\|\leq f(x) \leq \alpha.
\end{equation}
Combining this with \eqref{key11} and \eqref{key12} yields
\begin{equation*}
	\alpha < \frac{1}{2} < 1 - \alpha \leq \Big\|a_{i_1} - a_{i_2}\Big\| - \left\|x_{\ell_1} - a_{i_1}\right\| \leq \left\|x_{\ell_1} - a_{i_2}\right\|.
\end{equation*}
Therefore,
\begin{equation}\label{key14}
\alpha < \left\|x_{\ell_1} - a_{i_2}\right\|.
\end{equation} 

Now, setting $i_1=\bar i_1$,  $i_2=\bar i_2$, and $\ell_1=\bar\ell$, by~\eqref{key14} we get $\alpha < \left\|x_{\bar\ell} - a_{\bar i_2}\right\|$. This contradicts~\eqref{key12a}.  
Thus, we must have 
\begin{equation} \label{key15}
	f_3(x) \geq \frac{1}{2}\ \; \text{ for all }\ \;x=(x_1, x_2, x_3) \in \mathbb{R}^{2}\times\mathbb{R}^{2}\times \mathbb{R}^{2}.
\end{equation}
To show that
\begin{equation}\label{key16}
	\min \left\{f_3(x) \mid x\in \mathbb{R}^{2}\times\mathbb{R}^{2}\times \mathbb{R}^{2}\right\}  = \frac{1}{2},
\end{equation}
we choose \(\bar x_1=\left(\frac{1}{2}, 0\right)\) and \(\bar x_2=\left(\frac{1}{2}, 1\right)\).  Since
$$
	\|\bar x_1 - a_1\|= \frac{1}{2},\; \text{ } 
	\|\bar x_1 - a_2\|= \frac{1}{2},\; \text{ }	\|\bar x_2 - a_3\|= \frac{1}{2}\; \text{ and }\;
	\|\bar x_2 - a_4\|= \frac{1}{2},
$$
we have $f_3(\bar x_1, \bar x_2, x_3) \leq \frac{1}{2}$ for any $x_3\in\mathbb{R}^2$. Combining this with~\eqref{key15} gives~\eqref{key16}. Moreover, it holds that the centers system $(\bar x_1, \bar x_2, x_3)$ is a global  optimal solution to problem~\eqref{maxmin}.
Since $x_3$ can be taken arbitrarily, we conclude that
 \(S_3\) is unbounded, hence noncompact.
\end{example}

%%%%%%%%%%%%%%%%%%%%%%
%%%%%%%%%%%%%%%%%%%%%%%%
Observe that the noncompactness of \(S_3\) in Example~\ref{S-noncompact} was due to the presence of an unnecessary center, meaning that the corresponding ball did not need to cover any target points. Such an abnormal situation is also the reason for the noncompactness of \(S_k^F\).

 The next theorem provides a characterization for the compactness of the optimal solution set $S_k^F$. For the convenience of presentation, given each $k\in \mathbb{N}$,  let \(J_k= \{1, ..., k\}\). Then,
\begin{equation*}
	f^F_k(x_1, ..., x_k) = \max_{i \in I}\left( \min_{\ell \in J_k} \rho_F(x_\ell - a_i)\right)\; \ \text{for } (x_1, ..., x_k)\in \R^{dk}.
\end{equation*}
\begin{theorem}\label{compactness_condition} Consider problem \eqref{maxminn} with \(m > k \geq 2\). Then, the following properties are equivalent:
	\begin{enumerate}[label=(\roman*)]
		\item [{\rm (a)}]\(S_k^F\) is compact.
		\item [{\rm (b)}] \(\min\limits_{{(x_1, ..., x_k) \in \mathbb{R}^{dk}}}f^F_k(x_1, ..., x_k) < \min\limits_{(x_1, ..., x_{k - 1}) \in \mathbb{R}^{d(k - 1)}}f^F_{k - 1}(x_1, ..., x_{k - 1})\).
	\end{enumerate}
\end{theorem}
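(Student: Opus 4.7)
The plan is to establish the equivalence by proving each implication, using the existence of global optimal solutions granted by Theorem~\ref{thm_global} and the coercivity estimate $\rho_F(y) \geq \|y\|/\|F\|$ (which is exactly the left half of the inequality cited in the proof of Lemma~\ref{lemma1}). Throughout, I would denote by $v_k$ and $v_{k-1}$ the two infima appearing in (b), both of which are attained. A preliminary observation is that $v_k \leq v_{k-1}$ always holds, because for any $(k-1)$-tuple $(y_1, \ldots, y_{k-1})$ one has $f_k^F(y_1, \ldots, y_{k-1}, y_1) = f_{k-1}^F(y_1, \ldots, y_{k-1})$, so duplicating a center cannot increase the objective.

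For the implication (a) $\Rightarrow$ (b), I would argue by contraposition. Suppose $v_k = v_{k-1}$ and pick an optimal solution $(\bar{x}_1, \ldots, \bar{x}_{k-1})$ of the $(k-1)$-center problem, which exists by applying Theorem~\ref{thm_global} with $k$ replaced by $k-1 \geq 1$. For any $u \in \mathbb{R}^d$, the $k$-tuple $(\bar{x}_1, \ldots, \bar{x}_{k-1}, u)$ satisfies
$$f_k^F(\bar{x}_1, \ldots, \bar{x}_{k-1}, u) \leq f_{k-1}^F(\bar{x}_1, \ldots, \bar{x}_{k-1}) = v_{k-1} = v_k,$$
so it belongs to $S_k^F$. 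Letting $\|u\| \to \infty$ exhibits an unbounded family inside $S_k^F$, contradicting compactness.

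For the reverse implication (b) $\Rightarrow$ (a), closedness of $S_k^F$ is already known from Theorem~\ref{thm_global}, so only boundedness needs to be verified. I would reason by contradiction: assume $S_k^F$ is unbounded and pick a sequence $(x^{(n)}) \subset S_k^F$ with $\|x^{(n)}\| \to \infty$. By the pigeonhole principle, there exist an index $\ell_0 \in J_k$ and a subsequence $(n_j)$ such that $\|x_{\ell_0}^{(n_j)}\| \to \infty$. The coercivity estimate gives $\rho_F(x_{\ell_0}^{(n_j)} - a_i) \geq (\|x_{\ell_0}^{(n_j)}\| - \|a_i\|)/\|F\| \to \infty$ uniformly over the finite set $i \in I$. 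Therefore, for all sufficiently large $j$, one has $\rho_F(x_{\ell_0}^{(n_j)} - a_i) > v_k$ for every $i \in I$. Since $f_k^F(x^{(n_j)}) = v_k$ forces $\min_{\ell \in J_k} \rho_F(x_\ell^{(n_j)} - a_i) \leq v_k$ for each $i$, the minimum over $\ell$ is never attained at $\ell_0$. Thus removing the $\ell_0$-th component yields a $(k-1)$-tuple whose $f_{k-1}^F$-value coincides with $f_k^F(x^{(n_j)}) = v_k$, forcing $v_{k-1} \leq v_k$ and contradicting (b).

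The step I expect to require the most care is the second implication, in particular the "delete the runaway center" argument: one must ensure that the divergence $\rho_F(x_{\ell_0}^{(n_j)} - a_i) \to \infty$ is uniform across all $i \in I$ (which is why finiteness of $A$ matters) and that $v_k$ is a finite constant that eventually lies strictly below this quantity. Once this uniformity is in place, the contradiction $v_{k-1} \leq v_k < v_{k-1}$ is immediate.
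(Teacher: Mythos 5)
Your proposal is correct and follows essentially the same route as the paper: one direction appends an arbitrary extra center to a $(k-1)$-optimal solution to produce an unbounded family in $S_k^F$ when the two optimal values coincide, and the other uses the coercivity estimate $\rho_F(y)\geq \|y\|/\|F\|$ to show that a runaway center never realizes the minimum and can be deleted, forcing equality of the values. The only cosmetic difference is that you phrase the second direction with a sequence and a pigeonhole argument, whereas the paper selects a single sufficiently distant optimal solution via an explicit threshold $\|F\|(a+M+1)$.
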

\begin{proof}
Observe first that $S_k^F$ is nonempty and closed by Theorem~\ref{thm_global}. Furthermore, it is obvious that
\begin{equation}\label{Vn}
f_k^F(x_1, \ldots, x_k) \leq f_{k - 1}^F(x_1, \ldots , x_{k - 1})\ \; \mbox{\rm for all }(x_1, \ldots, x_k)\in \R^{dk}.
\end{equation}
Thus, it suffices to prove that the next statements are equivalent:
	\begin{enumerate}
		\item [{\rm (a')}] \(S_k^F\) is unbounded.
		\item [{\rm (b')}] \(\min\limits_{(x_1, ..., x_k) \in \mathbb{R}^{dn}}f_k^F(x_1, ..., x_k) = \min\limits_{(x_1, ..., x_{k - 1}) \in \mathbb{R}^{d(k - 1)}}f_{k - 1}^F(x_1, ..., x_{k - 1})\).
	\end{enumerate}
(a') \(\Longrightarrow\) (b'). Put
$a = \max\{\rho_F(a_1), ..., \rho_F(a_m)\}$
and  $$ M =  \min\limits_{(x_1, ..., x_k) \in \mathbb{R}^{dk}}f^F_k(x_1, ..., x_k).$$
Then, the unboundedness \(S_k^F\) implies that there exists \(y=(y_1, ..., y_k) \in S_k^F\) such that
\begin{equation}\label{aboveinequality}
\sum_{\ell = 1}^k \|y_\ell\|^2 \geq k\|F\|^2(a + M + 1)^2.
\end{equation}
Note that $f^F_k(y_1, ..., y_k)=M$. 
By \eqref{aboveinequality}, there exists an \(\bar \ell \in J_k\) such that
\begin{equation}\label{key35}
	\|y_{\bar \ell}\| \geq \|F\|(a + M + 1).
\end{equation}
We may assume without loss of generality that \(\bar \ell = k\). Fix any \(i \in I\). By~\cite[Proposition~2.1(c)]{CW_JOGO2004} we have
$\rho_F(y_k-a_i) \geq\|F\|^{-1}\|y_k - a_i\|$ and 
$$\dfrac{\|a_i\|}{\|F\|}\leq \rho_F(a_i)\leq a.$$ Thus, using~\eqref{key35} with \(\bar \ell = k\) gives us
$$\begin{array}{lll}
	\rho_F(y_k-a_i) \geq \dfrac{\|y_k - a_i\|}{\|F\|} &\geq \dfrac{\|y_k\| - \|a_i\|}{\|F\|}& \\
	&\geq (a + M + 1) - a\\
	& > M=f_k^F(y_1, \ldots, y_k)\\ &= \max\limits_{p \in I}\left(\min\limits_{\ell \in J_k} \rho_F(y_\ell - a_p)\right)\\
	& \geq  \min\limits_{\ell \in J_k} \rho_F(y_\ell - a_i).
\end{array}$$
This implies that $\min\limits_{\ell \in J_k} \rho_F(y_\ell - a_i)= \min\limits_{\ell \in J_{n - 1}} \rho_F(y_\ell - a_i)$. Therefore,
$$\max_{i \in I} \left(\min_{\ell \in J_k} \rho_F(y_\ell - a_i)\right)= \max_{i \in I}\left(\min_{\ell \in J_{n - 1}} \rho_F(y_\ell - a_i)\right).$$ Since \(y=(y_1, ..., y_k) \in S_k^F\), we have
\begin{equation*}\begin{array}{rcl}
	\min\limits_{(x_1, ..., x_{k - 1}) \in \mathbb{R}^{d(k - 1)}}f_{k - 1}^F(x_1, ..., x_{k - 1})\leq f_{k - 1}^F(y_1, ..., y_{k - 1}) & = & \max\limits_{i \in I}\left(\min\limits_{\ell \in J_{k - 1}} \rho_F(y_\ell - a_i)\right)\\
	& = & \max\limits_{i \in I} \left(\min\limits_{\ell \in J_k} \rho_F(y_\ell - a_i)\right)\\
	& = & \min\limits_{(x_1, ..., x_k) \in \mathbb{R}^{dk}}f_k^F(x_1, ..., x_k).
\end{array}
\end{equation*}
Taking \eqref{Vn} into account, we conclude that the equality in~(b') holds.\\\\
%%%%%%%%%%%%%%%%%%%%%%%%%%%%%%%
%%%%%%%%%%%%%%%%%%%%%%%%%%%%%%%%
\noindent(b') $\Longrightarrow$ (a'). Let \((y_1, ..., y_{k - 1}) \in \mathbb{R}^{d(k - 1)}\) be such that
	$$\begin{array}{ll}
		f_{k-1}^F(y_1, ..., y_{k - 1}) &= \min\limits_{(x_1, ..., x_{k - 1}) \in \mathbb{R}^{d(k- 1)}}f_{k - 1}^F(x_1, ..., x_{k - 1}) \\
		&= \min\limits_{(x_1, ..., x_k) \in \mathbb{R}^{dn}}f_k^F(x_1, ..., x_k).
	\end{array}$$
	For every \(y_k \in \mathbb{R}^d\), since $f_k^F(y_1, ..., y_k) \leq f_{k - 1}^F(y_1, ..., y_{k - 1})$, we get 
	\begin{equation*}
		f_k^F(y_1, ..., y_k) \leq \min_{(x_1, ..., x_k) \in \mathbb{R}^{dk}}f_k^F(x_1, ..., x_k),
	\end{equation*}
	and hence 
	\begin{equation*}
		f_k^F(y_1, ..., y_k) = \min_{(x_1, ..., x_k) \in \mathbb{R}^{dk}}f_k^F(x_1, ..., x_k).
	\end{equation*} 
 This means that \((y_1, \ldots , y_k) \in S_k^F\). 
	Since $y_k\in \mathbb{R}^d$ can be chosen arbitrarily, we arrive at~(a') and thus completes the proof of theorem. 
\end{proof}

Next, we study the case where $k=2$. In connection with Theorems~\ref{one_center}, \ref{compactness_condition} and Example~\ref{S-noncompact}, another question arises:  \textit{For $k=2$, is the global optimal solution set $S_2^F$ of problem~\eqref{maxminn} always compact?}

The next result answers this question affirmatively when $d= 1$.
\begin{theorem}
	Consider  problem \eqref{maxminn} with $k=2$ and $d=1$. Then,  the optimal solution set $S_2^F$ is compact.
\end{theorem}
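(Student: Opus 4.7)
The plan is to invoke Theorem~\ref{compactness_condition}, which reduces compactness of $S_2^F$ to a strict inequality between the two-center and one-center optimal values. Since that theorem's hypothesis requires $m>k=2$, I would first handle the edge case $m=2$ directly, and then dispatch the main case $m\geq 3$ by a concrete one-dimensional computation.

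In dimension one, since $F\subset\mathbb R$ is compact, convex, and contains $0$ in its interior, it has the form $F=[-\alpha,\beta]$ for some $\alpha,\beta>0$. Consequently the Minkowski gauge is piecewise linear: $\rho_F(t)=t/\beta$ for $t\geq 0$ and $\rho_F(t)=-t/\alpha$ for $t\leq 0$; in particular $\rho_F(t)=0$ if and only if $t=0$. After relabeling we may assume $a_1<a_2<\cdots<a_m$.

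For $m=2$, the choice $x_1=a_1$, $x_2=a_2$ gives $f_2^F(x_1,x_2)=0$, so the optimal value is $0$. Any $(\bar x_1,\bar x_2)\in S_2^F$ must then satisfy $\min_{\ell}\rho_F(\bar x_\ell-a_i)=0$ for each $i\in\{1,2\}$, which forces $\{\bar x_1,\bar x_2\}\supseteq\{a_1,a_2\}$; hence $S_2^F=\{(a_1,a_2),(a_2,a_1)\}$ is finite and in particular compact.

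For $m\geq 3$, I would apply Theorem~\ref{compactness_condition}. A short computation first shows that for any ordered subset $a_{i_1}<\cdots<a_{i_p}$ of the demand points, the corresponding one-center optimal value equals $(a_{i_p}-a_{i_1})/(\alpha+\beta)$, attained at $(\alpha a_{i_1}+\beta a_{i_p})/(\alpha+\beta)$: on $[a_{i_1},a_{i_p}]$ the objective reduces to $\max\bigl\{(x-a_{i_1})/\beta,\,(a_{i_p}-x)/\alpha\bigr\}$, whose minimum is the unique point where the two affine pieces meet, while outside this interval the objective is strictly larger. Applied to the full demand set this gives $\min_{x\in\mathbb R} f_1^F(x)=(a_m-a_1)/(\alpha+\beta)$. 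Choosing $x_1=a_1$ and $x_2=(\alpha a_2+\beta a_m)/(\alpha+\beta)$ (assigning $a_1$ to $x_1$ and $a_2,\ldots,a_m$ to $x_2$), one readily estimates
$$f_2^F(x_1,x_2)\leq\frac{a_m-a_2}{\alpha+\beta}<\frac{a_m-a_1}{\alpha+\beta}=\min_{x\in\mathbb R}f_1^F(x),$$
the strict inequality using $a_2>a_1$. This verifies condition~(b) of Theorem~\ref{compactness_condition}, so $S_2^F$ is compact. The main technical point is simply the explicit one-center calculation in dimension one; once that formula is available, both the edge case and the strict inequality fall out immediately.
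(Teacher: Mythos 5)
Your proposal is correct and follows essentially the same route as the paper: handle $m=2$ separately, then verify condition (b) of Theorem~\ref{compactness_condition} by splitting off the leftmost demand point and covering the remaining points with a single generalized ball of radius $(a_m-a_2)/(\alpha+\beta)$, which is strictly smaller than the one-center optimal value $(a_m-a_1)/(\alpha+\beta)$. The only difference is presentational: you compute the one-dimensional one-center value in closed form, whereas the paper obtains the same quantities by translating an optimal one-center to the origin and analyzing the set of farthest points $I_{\max}$.
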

\begin{proof} Clearly, if $m=2$, then the conclusion is obviously true. Otherwise, applying Theorem~\ref{one_center} yields that $S_1^F\neq \emptyset$. By making a translation for the whole
set $A$ (if necessary), we can assume without loss of generality that $0\in S_1^F$. Then we have $A=\{a_1,...,a_m\}\subset \mathbb B_F\left[0,r_1\right]$, where $r_1=f^F_1(0)=\max\limits_{i \in I}\rho_F(a_i)$. Set	$$
    I_{\max}=\{p\in I\mid \rho_F(a_p)=\max_{i \in I}\rho_F(a_i)\}.$$
Since the demand points are distinct by our assumption, $I_{\max}$ contains exactly two elements,
say, $I_{\max}=\{p_1,p_2\}$ for some $p_1,p_2\in I$. Assume that $a_{p_1}<0< a_{p_2}$ and $F=[a,b]$, where $a<0<b.$ Then we have
	\begin{equation*}
		\rho_F(a_{p_1})=\frac{a_{p_1}}{a}\text{ and } \rho_F(a_{p_2})=\frac{a_{p_2}}{b},
	\end{equation*} 
	which yields
	\begin{equation}\label{Imax}
\frac{a_{p_1}}{a}=\frac{a_{p_2}}{b}.
	\end{equation}
		As $m>2=k$, we can find $p_3\in I\setminus \{p_1\}$ such that
	\begin{equation}\label{xanhi}
		\rho_F(a_{p_2}-a_{p_3})=\max\limits_{i \in I\setminus\{p_1\}} \rho_F(a_{p_2}-a_i),
	\end{equation} 
	and note that
	\begin{equation}\label{a1nhoa3}
		a_{p_1}< a_{p_3}<a_{p_2}.
	\end{equation}
	Setting $\bar x_1=a_{p_1}$, $\bar x_2=\frac{ba_{p_3}-aa_{p_2}}{b-a}$ and $r_2=\frac{a_{p_2}-a_{p_3}}{b-a}$, we have
	$$\rho_F(a_{p_3}-\bar x_2)=\rho_F\left(\frac{a\left(a_{p_2}-a_{p_3}\right)}{b-a}\right)=\frac{a_{p_2}-a_{p_3}}{b-a}=\rho_F\left(\frac{b\left(a_{p_2}-a_{p_3}\right)}{b-a}\right)=\rho_F(a_{p_2}-\bar x_2).$$
	This together with~\eqref{xanhi} implies that
	$A\setminus \{a_{p_1}\}\subset \mathbb B_F[\bar x_2,r_2]$, and hence $$A\subset \mathbb B_F[\bar x_1,r_2]\cup\mathbb B_F[\bar x_2,r_2].$$
	Moreover, it follows from~\eqref{a1nhoa3} and~\eqref{Imax} that
	$$\frac{a_{p_2}-a_{p_3}}{b-a}< \frac{a_{p_2}-a_{p_1}}{b-a} = \frac{a_{p_2}}{b}.$$ 
	Thus, we have
	$$\min_{(x_1,x_2)\in \R\times \R}f_2^F(x_1,x_2)\leq \frac{a_{p_2}-a_{p_3}}{b-a}<\frac{a_{p_2}}{b}=\min_{x_1\in\mathbb{R}}f_1^F(x_1).$$
	Using Theorem~\ref{compactness_condition}, we see that $S_2^F$ is compact. The proof is now complete. 
\end{proof}	

The following example gives a negative answer to the question above in the case where  $\rho_F$ is the maximum norm.
\begin{example} \label{vdS2noncompact}
Let $d=2$, $A=\left\{a_1=(0,0), a_2=(1,0), a_3=(0,1), a_4=(1,1)\right\}$, and let \( F \) be the square with vertices $(-1,-1),  (-1,1), (1,-1)$, and $ (1,1)$. Consider the problem
$$
	\text{min}\left\{f_1^F(x_1)=\max_{i\in I}\rho_F(x_1-a_i)\mid x_1=(x_1^1,x_1^2)\in \R^{2}\right\},$$
which reads as
\begin{eqnarray}\label{maxmin1}
	\text{min}\left\{f_1^F(x_1)=\max_{i\in I}\|x_1-a_i\|_\infty\mid x_1=(x_1^1,x_1^2)\in \R^{2}\right\},
\end{eqnarray}
where $\|\cdot\|_\infty$ is the maximum norm. The optimal value of problem~\eqref{maxmin1} is $\frac{1}{2}$. Indeed, suppose that 
$$\min_{x_1\in \mathbb{R}^2}f_1^F(x_1)=f_1^F(\bar x_1)=r_1\text{  for some }\bar x_1\in \mathbb{R}^2.$$
Since $a_1,a_4\in \mathbb B_F\left[\bar x_1, r_1\right]$, we have
$$\|a_1-a_4\|_\infty\leq \|a_1-\bar x_1\|_\infty+ \|\bar x_1-a_4\|_\infty\leq r_1+r_1=2r_1.$$ Combining this with the fact that $\|a_1-a_4\|_\infty=1$ yields $r_1 \geq \frac{1}{2}$. By choosing $\bar x_1=\left(\frac{1}{2},\frac{1}{2}\right)$, we get $A \subset \mathbb  B_F\left[\bar x_1, \frac{1}{2}\right]$. Thus, the global optimal value of~\eqref{maxmin1} is~$\frac{1}{2}$ and $\left(\frac{1}{2},\frac{1}{2}\right)$ is a global optimal solution. Next, we consider the problem
\begin{eqnarray}\label{maxmin2}
	\text{min}\left\{f_2^F(x_1,x_2)=\max_{i\in I}\left(\min_{\ell\in J}\|x_\ell-a_i\|_\infty\right)\mid (x_1,x_2)\in \left(\R^{2}\right)^2\right\}
\end{eqnarray} 
and let $(\bar x_1,\bar x_2)$ be a global  optimal solution of it. 
Suppose for a while that the optimal value of~\eqref{maxmin2}, which is denoted by~$r_2$, is less than $\frac{1}{2}$. Then we can assume without loss of generality that $\mathbb B_F\left[\bar x_1,r_2\right]$ contains at least two distinct demand points, which are denoted by $a_{i_1}$ and $a_{i_2}$. Then we have
\begin{equation*}\label{greater}
	2r_2<1=\left\|a_{i_1} - a_{i_2}\right\|_\infty\leq 2r_2,
\end{equation*}
a contradiction. Thus, 
$$\min_{(x_1,x_2)\in \left(\mathbb{R}^2\right)^2}f^F_2(x_1,x_2)=\frac{1}{2}=\min_{x_1\in \mathbb{R}^2}f^F_1(x_1).$$
Applying Theorem~\ref{compactness_condition}, we can assert that $S_2^F$ is noncompact.
\end{example}
%%%%%%%%%%%%%%%%%%%%%%%%%%%%%%%%%%%%%%%%%%%%%%%%
Next, we will show that when $k=2$, the global optimal solution set $S_2$ of problem \eqref{maxmin} is always compact. To proceed, we need the lemma below.

\begin{lemma}\label{existence_subspace_V} Given any finite subset $A=\{a_1,...,a_m\}$ of $\mathbb{R}^d$ with $d\geq 1$, one can find a \((d - 1)\)-dimensional linear subspace \(V\) of \(\mathbb{R}^d\) such that 
\begin{equation}\label{intersection-empty}
   \left (V \setminus \{0\}\right) \cap \{a_1, ..., a_m\}= \emptyset.
\end{equation}
\end{lemma}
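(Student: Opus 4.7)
The plan is to realize a $(d-1)$-dimensional linear subspace $V$ of $\mathbb{R}^d$ as the orthogonal complement $V = v^{\perp}$ of a suitably chosen nonzero vector $v \in \mathbb{R}^d$, and then to pick $v$ so that $\langle v, a_i\rangle \neq 0$ whenever $a_i \neq 0$. Under such a choice, the required disjointness \eqref{intersection-empty} will follow at once: for every $a_i \neq 0$ we will have $a_i \notin V$ by construction, while any $a_i$ equal to $0$ automatically lies outside $V \setminus \{0\}$.

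First I would dispose of the trivial case $d = 1$ by taking $V = \{0\}$, which is $0$-dimensional and satisfies $V \setminus \{0\} = \emptyset$. For $d \geq 2$, let $I_0 = \{i \in I \mid a_i \neq 0\}$ and, for each $i \in I_0$, define the hyperplane through the origin
\begin{equation*}
H_i = \{v \in \mathbb{R}^d \mid \langle v, a_i\rangle = 0\}.
\end{equation*}
Each $H_i$ is a proper linear subspace of $\mathbb{R}^d$ (of dimension $d-1$, hence with empty interior). The key fact needed is the classical observation that $\mathbb{R}^d$ cannot be written as a finite union of proper linear subspaces; this can be justified by a short Baire category argument (each $H_i$ is nowhere dense in $\mathbb{R}^d$) or, equivalently, by observing that each $H_i$ has $d$-dimensional Lebesgue measure zero. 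Either way, the set $\mathbb{R}^d \setminus \bigcup_{i \in I_0} H_i$ is nonempty, and we may select $v$ inside it. Since $v$ avoids $H_i$, it is nonzero, and the subspace $V := v^{\perp}$ is $(d-1)$-dimensional.

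It then remains to verify \eqref{intersection-empty}. Take any $a_i \in A$. If $a_i = 0$, then $a_i \notin V \setminus \{0\}$ trivially. If $a_i \neq 0$, then $i \in I_0$, and the choice $v \notin H_i$ gives $\langle v, a_i\rangle \neq 0$, which means $a_i \notin V$. In either case $a_i \notin V \setminus \{0\}$, so \eqref{intersection-empty} holds.

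I do not expect any genuine obstacle here; the only point that deserves care is the step asserting that $\mathbb{R}^d$ is not exhausted by finitely many proper subspaces $H_i$, and since this is a standard fact with several short proofs (Baire category, null-set measure argument, or an elementary induction on the number of subspaces over an infinite field), the overall argument is quite short.
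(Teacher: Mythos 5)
Your proof is correct and follows essentially the same route as the paper: choose a vector $v$ avoiding the finitely many hyperplanes $\{v \in \mathbb{R}^d \mid \langle v, a_i\rangle = 0\}$, $a_i \neq 0$ (by Baire category or the measure-zero observation, both of which the paper also uses, the latter in a remark), and take $V = v^{\perp}$. The only cosmetic difference is the degenerate case: you split off $d=1$, while the paper splits off $A=\{0\}$; note that when every $a_i = 0$ you should simply pick any nonzero $v$ directly, since avoiding an empty union of hyperplanes does not by itself guarantee $v \neq 0$.
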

\begin{proof}  If $A$ consists just of the vector 0, then any \((d - 1)\)-dimensional linear subspace \(V\) of \(\mathbb{R}^d\) would satisfy the condition~\eqref{intersection-empty}. Now, suppose that $A$ contains at least one nonzero vector. For every $a_i\in A\setminus\left\{0\right\}$, define
$B_i=\{v\in \R^n\mid \la v, a_i\ra=0\}$
and note that $B_i$ is a closed subset of set $\mathbb{R}^d$ with empty interior. Thus, the latter set is of the first category in $\mathbb{R}^d$ (see, e.g.,~\cite[p.~42]{Rudin}). Hence the set 
$B=\bigcup_{i\in I,\, a_i\neq 0} B_i$,
where $I=\{1,\ldots,m\}$, is also of the first category in $\mathbb{R}^d$. As the complete metric space $\mathbb{R}^d$ is of the second category in itself (see~\cite[p.~42]{Rudin}), there exists $\bar v\in \mathbb{R}^d\setminus B$. Then we have that $\bar v$ is nonzero and $\la \bar v, a_i\ra\neq 0$ for every $i\in I$ with $a_i\neq 0$. Therefore, the \((d - 1)\)-dimensional linear subspace $V=\{x\in \R^n\mid  \la \bar v, x\ra=0\}$ of \(\mathbb{R}^d\) satisfies~\eqref{intersection-empty}.
\end{proof}

\begin{remark}{\rm In the proof of Lemma~\ref{existence_subspace_V}, since each set $B_i$ is a hyperplane, it has Lebesgue measure zero. Consequently, the set $B$ also has Lebesgue measure zero. This provides an alternative way to the existence of a vector $\bar{v} \in \R^d \setminus B$.}
   
\end{remark}
%%%%%%%%%%%%%%%%%%%%%%%%%%%%%%%%%%%%%%%%%%%
%%%%%%%%%%%%%%%%%%%%%%%%%%%%%%%%%%%%%%%%%%%%
%%%%%%%%%%%%%%%%%%%%%%%%%%%%%%%%%%%%%%%%%
%\begin{remark}   Another natural proof of this lemma will be presented in Appendix A.\end{remark}
%%%%%%%%%%%%%%%%%%%%%%%%%%%%%%%

We are now ready to prove that $S_2$ is compact.

\begin{theorem}\label{compactwhen_k=2}
Consider  problem~\eqref{maxmin} with $k=2$. Then $S_2$ is compact.
\end{theorem}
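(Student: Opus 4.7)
The plan is to apply Theorem \ref{compactness_condition} with $k=2$. That theorem requires $m > k = 2$, so I first dispose of the edge cases: if $m = 2$, then $S_2 = \{(a_1, a_2),(a_2, a_1)\}$ is finite and hence compact; if $d = 1$, the conclusion is covered by the preceding theorem for $d=1$. From now on assume $m \geq 3$ and $d \geq 2$. The task reduces to proving the strict inequality $\min f_2 < \min f_1$.

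By Theorem \ref{one_center} the one-center problem has an optimal solution $\bar x$; set $r_1 = f_1(\bar x)$, which is positive since the $a_i$ are distinct and $m \geq 2$, and let $I_{\max} = \{i \in I : \|\bar x - a_i\| = r_1\}$. Apply Lemma \ref{existence_subspace_V} to the finite set $\{a_i - \bar x : i \in I\} \subset \R^d$ to obtain a $(d - 1)$-dimensional linear subspace $V \subset \R^d$ such that $(V \setminus \{0\}) \cap \{a_i - \bar x : i \in I\} = \emptyset$, and fix a unit normal $\bar v$ to $V$. Then $\langle \bar v, a_i - \bar x \rangle \neq 0$ for every $i \in I_{\max}$ (because $a_i \neq \bar x$ on $I_{\max}$), and I partition $I_{\max} = I^+_{\max} \cup I^-_{\max}$ according to the sign of this inner product.

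The crucial claim is that both $I^+_{\max}$ and $I^-_{\max}$ are nonempty. Suppose otherwise; after possibly flipping $\bar v$, assume $\langle \bar v, a_i - \bar x\rangle > 0$ for every $i \in I_{\max}$. The identity
$$\|\bar x + t\bar v - a_i\|^2 \;=\; \|\bar x - a_i\|^2 - 2t\,\langle \bar v, a_i - \bar x\rangle + t^2$$
yields $\|\bar x + t\bar v - a_i\| < r_1$ for every $i \in I_{\max}$ once $t > 0$ is sufficiently small, while continuity preserves the strict inequality $\|\bar x - a_i\| < r_1$ at the remaining indices. Since $I$ is finite, a single $t > 0$ works for all $i$, contradicting the optimality of $\bar x$. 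Hence both $I^+_{\max}$ and $I^-_{\max}$ are nonempty, and for sufficiently small $t > 0$ the pair $y_1 = \bar x + t \bar v$, $y_2 = \bar x - t \bar v$ satisfies $\|y_1 - a_i\| < r_1$ for $i \in I^+_{\max}$, $\|y_2 - a_i\| < r_1$ for $i \in I^-_{\max}$, and both distances stay below $r_1$ for $i \notin I_{\max}$ by continuity. Therefore $f_2(y_1, y_2) < r_1 = \min f_1$, and Theorem \ref{compactness_condition} delivers the compactness of $S_2$.

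The main obstacle is the non-separation claim about $I_{\max}$: geometrically, at an optimal one-center $\bar x$ the active target points must not lie strictly on one side of any hyperplane through $\bar x$, for otherwise $\bar x$ could be slid to the opposite side to shrink every active distance at once. The argument above makes this precise via the specific hyperplane $\bar x + V$ produced by Lemma \ref{existence_subspace_V}, which conveniently keeps every nonzero $a_i - \bar x$ off the separating hyperplane, so the two perturbed centers $\bar x \pm t\bar v$ strictly improve on $\bar x$ and witness the strict drop in the optimal value.
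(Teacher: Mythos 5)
Your argument is correct and is essentially the paper's own proof: both hinge on Lemma~\ref{existence_subspace_V} to produce a hyperplane through an optimal one-center that misses the (shifted) data points, perturb that center to the two points $\bar x \pm t\bar v$ along the normal so that $f_2(\bar x+t\bar v,\,\bar x-t\bar v)<\min f_1$, and then conclude via Theorem~\ref{compactness_condition}. The differences are cosmetic: the paper translates so that $0\in S_1$ and covers all points with the explicit radius $\sqrt{r^2-\bar\epsilon^2}$, whereas you treat only the active set $I_{\max}$ with a soft small-$t$ argument (your claim that both $I^{+}_{\max}$ and $I^{-}_{\max}$ are nonempty is in fact not needed, since each active point is absorbed by whichever of $\bar x\pm t\bar v$ matches its sign), and you explicitly dispose of the $m=2$ case, which the paper's appeal to Theorem~\ref{compactness_condition} (stated for $m>k$) leaves implicit.
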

\begin{proof}
Theorem~\ref{one_center}, we have $S_1\neq \emptyset$. Without loss of generality, assume $0\in S_1$. Let \(r= f_1(0)\). According to Lemma~\ref{existence_subspace_V}, there is a \((d - 1)\)-dimensional linear subspace \(V\) of \(\mathbb{R}^d\) such that~\eqref{intersection-empty} holds. Let \(w \in V^\perp\),  where
$V^\perp$ is the orthogonal complement of $V$, be such that \(\|w\| = 1\). Fix any \(a \in \{a_1, ..., a_m\}\). 

If $a=0$, then for any $\varepsilon \in \left( 0,\frac{r}{\sqrt{2}}\right]$ we have
\begin{equation}\label{choosee1} a\in \mathbb B\left[\epsilon w, \sqrt{r^2 - \epsilon^2}\right]. 
 \end{equation}
If $a \neq 0$, then by ~\eqref{intersection-empty} we see that \(a \notin V\). Note that $V^\perp$ is a 1-dimensional subspace, and so we get $\langle a, w\rangle \neq 0$. Thus, for any  \(\epsilon\in (0,|\langle a, w\rangle|)\) we have
$$	\begin{array}{ll}
		\|a - \operatorname{sgn}(\langle a, w\rangle)\epsilon w\|^2 &= \|a\|^2  - 2|\langle a, w\rangle|\epsilon + \epsilon^2 \\
		&< \|a\|^2 - 2\epsilon^2 + \epsilon^2 \\
		&\leq r^2 - \epsilon^2.
	\end{array}$$
This implies that
\begin{equation}\label{keycuoi}
    a\in \Theta=\left\{\begin{array}{ll}
      \mathbb B\left[\epsilon w, \sqrt{r^2 - \epsilon^2}\right] &  \text{\rm if } \langle a, w \rangle > 0,\\[0.1in]
    \mathbb B\left[-\epsilon w, \sqrt{r^2 - \epsilon^2}\right]  & \text{\rm otherwise.}
 \end{array}\right.
\end{equation}
 Now, for every \(i = 1, ..., m\), set
$$\epsilon_i=\begin{cases}
     \frac{r}{\sqrt{2}}&\textrm { if } a_i = 0,\\[0.1in]
     \frac{|\langle a_i, w\rangle|}{2} &\textrm { if } a_i \neq 0.
\end{cases}$$
 Define \(\bar\epsilon > 0\) by  $\bar\epsilon= \min\{\epsilon_1, ..., \epsilon_m\}.$ Then, due to~\eqref{choosee1} and~\eqref{keycuoi}, for this $\epsilon$ we have $$\{a_1, ..., a_m\} \subset  \mathbb B\left[\bar\epsilon w, \sqrt{r^2 - \bar\epsilon^2}\right] \cup \mathbb B\left[-\bar\epsilon w, \sqrt{r^2 - {\bar\epsilon}^2}\right].$$ This implies that
$$\min_{(x_1, x_2) \in \mathbb{R}^{d}\times \mathbb{R}^{d}}f_2(x_1, x_2) \leq \sqrt{r^2 - {\bar\epsilon}^2} < r = \min_{x_1 \in \mathbb{R}^d}f_1(x_1).$$
 So, according to Theorem~\ref{compactness_condition}, the set $S_2$ is compact. 
\end{proof}

The next example shows that $S_2$ can be disconnected, and hence nonconvex.
\begin{example} \label{S-compact}
	Consider problem~\eqref{maxmin} with $d=1, m=3, k=2$, and $$A=\left\{a_1=0,\; a_2=1,\; a_3=10\right\}.$$
	Since $|a_{i_1}-a_{i_2}|\geq 1$ for all $i_1,i_2\in I$ satisfying $i_1\neq i_2$, by using the same argument as in Example~\ref{S-noncompact}, we can show that
	\begin{equation}\label{key34}
		\min_{(x_1, x_2) \in \mathbb{R}^2} f_2(x_1, x_2) = \frac{1}{2}.
	\end{equation}
	Take any $(\bar x_1,\bar x_2)\in S_2$. Since $k<m$, by \eqref{key34}, we may assume without loss of generality that
	$\mathbb B\left[\bar x_1,\frac{1}{2}\right]$ contains at least one demand points. Then, a direct verification shows that $\bar x_1=\frac{1}{2}$ and \(\bar x_2 \in \left[a_3 - \frac{1}{2}, a_3 + \frac{1}{2}\right]\). It follows that
	\begin{equation*}
		S_2 = \left(\left\{\frac{1}{2}\right\} \times \left[a_3 - \frac{1}{2}, a_3 + \frac{1}{2}\right]\right) \bigcup \left(\left[a_3 - \frac{1}{2}, a_3 + \frac{1}{2}\right] \times \left\{\frac{1}{2}\right\}\right),
	\end{equation*}
	and hence $S_2$ is  compact and disconnected.
\end{example}

%%%%%%%%%%%%%%%%%%%%%%%%%%%%%%%%%%

%%%%%%%%%%%%%%%%%%%%%%%%%%%%%%%%%%%%%%%%%%%%%%%%%%%
%%%%%%%%%%%%%%%%%%%%%%%%%%%%%%%%%%%%%%%%%%%%%%%%%%%%
%%%%%%%%%%%%%%%%%%%%%%%%%%%%%%%%%%%%%%%%%%%%%%%%%%%%%%%
%%%%%%%%%%%%%%%%%%%%%%%%%%%%%%%%%%%%%%%%%%%%%%%%%%%%%%%

		\section{Local Optimal Solutions}
		
For $k\geq 2$, the $k$-center problem~\eqref{maxminn} is a nonconvex optimization problem in general. So, it may have local optimal solutions, which are not global optimal ones. In this section, we will establish a sufficient condition for the existence of such local optimal solutions. We first recall the  notion of {\em local optimal solution} to  problem~(\ref{maxminn}).

	\begin{definition}
		We say that $\bar x = (\bar {x}_1, \ldots, \bar {x}_k) \in  \mathbb{R}^{dk}$ is a \textit{local optimal solution} to (\ref{maxminn})
		if there is $\epsilon >0$ such that $f_k^F(\bar x) \leq  f_k^F(x)$ for every $x = (x_1, \ldots, x_k) \in  \mathbb{R}^{dk}$ satisfying
		$\|x_\ell - \bar x_\ell\| < \epsilon$ for all $\ell\in J$. The set of the local optimal solutions to~(\ref{maxminn}) (resp., to~(\ref{maxmin})) is denoted by $S^{F,\, {\rm loc}}_k$ (resp., $S^{\rm loc}_k$).
	\end{definition}

The next example shows that the inclusion $S_k \subset S^{\rm loc}_k$ can be strict.
	\begin{example}
		Consider problem~\eqref{maxmin} with $d=1, m=2, k=2$,  $A=\left\{a_1=0,\; a_2=1\right\}$. We have $S_2=\left\{\left(a_1, a_2\right),\left(a_2, a_1\right)\right\}$ and the global optimal value of problem is $0$. The vector $\bar{x}=\left(\bar{x}_1, \bar{x}_2\right)= \left(\frac{1}{2},3\right)$  is a local optimal solution, while it is not a global  optimal solution. Indeed, for $\varepsilon=\frac{1}{2}$, a direct verification shows that $f_2(\bar{x})\leq f_2(x)$ for every $x=\left(x_1, x_2\right) \in \mathbb{R}^2$ satisfying $|x_1-\bar{x}_1|<\varepsilon$ and $|x_2-\bar{x}_2|<\varepsilon$. Since $f_2(\bar{x})=\frac{1}{2}$, we have  $\bar{x} \in S^{{\rm loc}}_2 \setminus S_2$.
	\end{example}

	For $x=(x_1,\ldots,x_k)\in \mathbb{R}^{dk}$, we have
	$$f_k^F(x)=\max_{i\in I}\left(\min_{\ell\in J}\rho_F(x_\ell-a_i)\right)=\max_{i\in I}\left(\sum_{r\in J}\rho_F(x_r-a_i)- \max_{\ell\in J} \sum_{r\in J\setminus \{\ell\}}\rho_F(x_r-a_i)\right ).
	$$
	Define the functions
	\begin{equation}\label{key1}
		g_i(x)=\sum_{r\in J}\rho_F(x_r-a_i),    
	\end{equation}
	
	\begin{equation}\label{key2}
		h_i(x)=\max_{\ell\in J} \sum_{r\in J\setminus\{\ell\}}\rho_F(x_r-a_i),    
	\end{equation}
	and
		\begin{equation}\label{key2a} h_{i,\ell}(x)=\sum_{r\in J\setminus \{\ell \}} \rho_F(x_r-a_i)\; \text{ for every } (i,\ell)\in I\times J.
	\end{equation}
	Then, we obtain the representation
	\begin{equation}\label{key3}
		f_k^F(x)=\max_{i\in I}\,(g_i(x)-h_i(x)),   
	\end{equation}
	where
	\begin{equation}\label{key7}
		h_i(x)=\max_{\ell\in J}h_{i,\ell}(x).
	\end{equation}
	Set
		\begin{equation}\label{key5}
		J_i(x)=\{\ell\in J\mid h_i(x)=\max_{\ell\in J}h_{i,\ell}(x)\}.
		\end{equation}
		The following proposition shows that for any $(i,\ell)\in I\times J$, the inclusion $\ell\in J_i(x)$ means that the center $x_\ell$ is attractive to the demand point $a_i$.
	
	\begin{proposition} \label{prop34}
		For every $i\in I$, where $J_i(x)$ defined by~\eqref{key5}, one has
			\begin{equation}\label{J(i)}
				J_i(x)=\{\ell\in J\mid a_i\in A[x_\ell]\},
			\end{equation}
			where $A[x_\ell]$ is the attraction set defined by (\ref{attraction}).
			\end{proposition}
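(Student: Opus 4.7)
The plan is to exploit the simple algebraic identity relating $h_{i,\ell}$ and $\rho_F(x_\ell-a_i)$ that follows directly from the definitions.

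First I would observe that, since $J$ is finite, for each fixed $i\in I$ one has
\begin{equation*}
h_{i,\ell}(x)=\sum_{r\in J\setminus\{\ell\}}\rho_F(x_r-a_i)=\Bigl(\sum_{r\in J}\rho_F(x_r-a_i)\Bigr)-\rho_F(x_\ell-a_i)=g_i(x)-\rho_F(x_\ell-a_i),
\end{equation*}
where $g_i(x)$ does not depend on the index $\ell$. This is the whole substance of the proposition; everything else is a translation between ``max'' and ``min''.

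Next I would take the maximum over $\ell\in J$ on both sides. Because $g_i(x)$ is a constant in $\ell$, this yields
\begin{equation*}
h_i(x)=\max_{\ell\in J}h_{i,\ell}(x)=g_i(x)-\min_{r\in J}\rho_F(x_r-a_i).
\end{equation*}
Combining the two displays, for an arbitrary $\ell\in J$ the equality $h_i(x)=h_{i,\ell}(x)$ (that is, $\ell\in J_i(x)$ in the sense intended by formula~\eqref{key5}, where the maximum over $\ell$ is attained) is equivalent to $\rho_F(x_\ell-a_i)=\min_{r\in J}\rho_F(x_r-a_i)$, which is precisely the defining condition $a_i\in A[x_\ell]$ from~\eqref{attraction}.

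I would then conclude both inclusions of~\eqref{J(i)} from this equivalence and close the proof. There is no real obstacle here: the statement is essentially the duality ``maximizing the sum of the other $k-1$ terms is the same as minimizing the omitted term,'' and the only care needed is to unpack the notation in~\eqref{key2}, \eqref{key2a}, \eqref{key5}, and~\eqref{attraction} carefully (in particular, to read $J_i(x)$ as the argmax set of $\ell\mapsto h_{i,\ell}(x)$).
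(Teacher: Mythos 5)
Your proposal is correct and follows essentially the same route as the paper's proof: rewrite $h_{i,\ell}(x)=\sum_{r\in J}\rho_F(x_r-a_i)-\rho_F(x_\ell-a_i)$, take the maximum over $\ell$ to get $h_i(x)=\sum_{r\in J}\rho_F(x_r-a_i)-\min_{\ell\in J}\rho_F(x_\ell-a_i)$, and conclude that $\ell$ attains the maximum exactly when it attains the minimum, i.e.\ when $a_i\in A[x_\ell]$. Your explicit reading of $J_i(x)$ as the argmax set of $\ell\mapsto h_{i,\ell}(x)$ is the intended interpretation of \eqref{key5} and matches the paper's argument.
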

	\begin{proof}
	    It follows from~\eqref{key2a} that
		$$h_{i,\ell}(x)= \sum_{r\in J}	\rho_F(x_r-a_i)-\rho_F(x_\ell-a_i)\; \text{ for every } (i,\ell)\in I\times J .$$
		So, by~(\ref{key2}) one has $$\begin{array}{ll}
			h_i(x)&=\max\limits_{\ell\in J} \left(\sum\limits_{r\in J}\rho_F(x_r-a_i)-\rho_F(x_\ell-a_i)\right)\\
			&=\sum\limits_{r\in J}\rho_F(x_r-a_i)+ \max\limits_{\ell\in J}\left (-\rho_F(x_\ell-a_i)\right)\\
			&=\sum\limits_{r\in J}\rho_F(x_r-a_i)- \min\limits_{\ell\in J}\rho_F(x_\ell-a_i).
		\end{array}$$
		Thus, the maximum in~(\ref{key7}) is achieved if and only if the minimum $\min\limits_{\ell\in J}\rho_F(x_\ell-a_i)$ is
		attained. Together with~(\ref{attraction}) and~(\ref{J(i)}),  this fact implies~(\ref{J(i)}).
	\end{proof}

	Now we are already able to establish a sufficient condition for the existence of local optimal solutions to problem \eqref{maxminn}.
	
	\begin{theorem}\label{theo_sufficient} 	For $\bar x=(\bar x_1,\ldots, \bar x_k)\in \mathbb{R}^{dk}$, assume that
	 $J_i(\bar x)$ is a singleton for every $i\in I$. Assume further that for every $ \ell\in J$, if  $A[\bar{x}_\ell]$ is nonempty, then $\bar{x}_\ell$ is a global optimal solution of the 1-center problem defined by the data set $A[\bar{x}_\ell]$. Then the vector $\bar x=(\bar x_1,\ldots, \bar x_k)$ is a local optimal solution to problem $(\ref{maxminn})$.
	\end{theorem}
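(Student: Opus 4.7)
The plan is to leverage the singleton assumption on $J_i(\bar{x})$ to freeze the attraction structure in a small enough neighborhood of $\bar{x}$, and then reduce the $k$-center objective to a maximum of independent $1$-center objectives, each of which is minimized at $\bar{x}_\ell$ by hypothesis.

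First, for each $i \in I$, I would write $J_i(\bar{x}) = \{\ell_i\}$ for the unique attracting index, and use Proposition~\ref{prop34} to identify $\ell_i$ with the unique $\ell$ such that $a_i \in A[\bar{x}_\ell]$. The singleton condition says precisely that
\[
\rho_F(\bar{x}_{\ell_i} - a_i) < \rho_F(\bar{x}_\ell - a_i) \quad \text{for every } \ell \in J \setminus \{\ell_i\},
\]
so there is a strict positive gap. Setting $\delta_i := \min_{\ell \neq \ell_i} \rho_F(\bar{x}_\ell - a_i) - \rho_F(\bar{x}_{\ell_i} - a_i) > 0$ and $\delta := \min_{i \in I} \delta_i > 0$ (finite minimum), I would then invoke the Lipschitz continuity of $\rho_F$ (already used in the proof of Theorem~\ref{thm_global}, via~\cite[Proposition~6.18]{mordukhovich2023easy}) with some constant $L > 0$ to choose $\epsilon \in (0, \delta/(3L))$. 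For any $x = (x_1,\dots,x_k)$ with $\|x_\ell - \bar{x}_\ell\| < \epsilon$ for all $\ell \in J$, each $\rho_F(x_\ell - a_i)$ differs from $\rho_F(\bar{x}_\ell - a_i)$ by less than $\delta/3$, so the strict gap is preserved, and $\ell_i$ remains the unique minimizer of $\min_{\ell \in J} \rho_F(x_\ell - a_i)$.

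This preservation of the attraction structure gives, for all such $x$, the key identity
\[
f_k^F(x) = \max_{i \in I} \rho_F(x_{\ell_i} - a_i) = \max_{\substack{\ell \in J \\ A[\bar{x}_\ell] \neq \emptyset}} \;\max_{a_i \in A[\bar{x}_\ell]} \rho_F(x_\ell - a_i),
\]
and, in particular, the same identity at $x = \bar{x}$. Now for each $\ell \in J$ with $A[\bar{x}_\ell]$ nonempty, the hypothesis that $\bar{x}_\ell$ solves the $1$-center problem associated with the data set $A[\bar{x}_\ell]$ means that
\[
\max_{a_i \in A[\bar{x}_\ell]} \rho_F(\bar{x}_\ell - a_i) \leq \max_{a_i \in A[\bar{x}_\ell]} \rho_F(x_\ell - a_i)
\]
for every $x_\ell \in \mathbb{R}^d$, in particular for the perturbed components. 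Taking the maximum over those $\ell$ yields $f_k^F(\bar{x}) \leq f_k^F(x)$, which establishes the local optimality.

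The main (and essentially only) obstacle is the neighborhood argument: one has to verify carefully that, when the attraction index $\ell_i$ is uniquely determined at $\bar{x}$, a small perturbation cannot reassign any $a_i$ to a different center, so that the block-separable reformulation above is valid. Once that stability of the natural clustering is established, everything else is a one-line application of the $1$-center optimality assumption, and indices $\ell$ with $A[\bar{x}_\ell] = \emptyset$ simply drop out of both sides and need no further treatment.
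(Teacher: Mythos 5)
Your proposal is correct and follows essentially the same route as the paper's proof: both freeze the attraction structure near $\bar x$ using the strict gap coming from the singleton assumption (the paper invokes continuity of $\rho_F$ implicitly, you make it quantitative via the Lipschitz constant), then rewrite $f_k^F$ on that neighborhood as $\max_{\ell,\,A[\bar x_\ell]\neq\emptyset}\max_{a_i\in A[\bar x_\ell]}\rho_F(x_\ell-a_i)$ and apply the $1$-center optimality of each $\bar x_\ell$ blockwise. No gaps; your direct regrouping of indices by $\ell_i$ is, if anything, a slightly cleaner way to obtain the block decomposition than the paper's passage through the perturbed attraction sets $A[\widetilde x_\ell]$.
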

	
	\begin{proof}
	By the hypothesis, for every $i\in I$, we denote the unique element of $J_i(\bar{x})$ by $\ell(i)$. Then one has
	\begin{equation}\label{keycc}
\rho_F(\bar{x}_{\ell(i)}-a_i)<\rho_F(\bar{x}_{\ell}-a_i)\text{ for each }\ell \in J \setminus\{\ell(i)\}.
	\end{equation}
	 Thus, there exists $\varepsilon>0$ such that
		\begin{equation}\label{key25}
			\rho_F(\widetilde{x}_{\ell(i)}-a_i)<\rho_F(\widetilde{x}_{\ell}-a_i) \quad \text{for all }i \in I \text{ and all } \ell \in J \setminus\{\ell(i)\},    
		\end{equation}
		whenever the vector $\widetilde{x}=\left(\widetilde{x}_1, \ldots, \widetilde{x}_k\right) \in \mathbb{R}^{d k}$ satisfies the condition $\|\widetilde{x}_\ell-\bar{x}_\ell\|<\varepsilon$ for all $\ell \in J$. Then it follows from~(\ref{key25}) that $J_i(\widetilde{x})=\{\ell(i)\}$ for every $i\in I$, which yields
		\begin{equation}\label{key37}
			\rho_F(\widetilde{x}_{\ell(i)}-a_i)=\min _{\ell\in J }\rho_F(\widetilde{x}_\ell-a_i).
		\end{equation}
			Since $J_i(\bar{x})=\{\ell(i)\}$ and $J_i(\widetilde{x})=\{\ell(i)\}$, we obtain from~\eqref{J(i)} that
				\begin{equation}\label{key51}
				\ell(i)=\ell\text{ for every }i\in I \text{ such that }\,a_i\in A[\bar{x}_\ell].
			\end{equation}
			and
			\begin{equation}\label{key52}
\ell(i)=\ell\text{ for every }i\in I\text{ such that }\,a_i\in A[\widetilde{x}_\ell].
			\end{equation}
	Moreover, for every $\ell \in J$ satisfying $A\left[\bar{x}_\ell\right] \neq \emptyset$, we get from the hypothesis that
		\begin{equation}\label{key53}
		\max_{i \in I,\; a_i\in A[\bar x_\ell]} \rho_F(\widetilde{x}_\ell-a_i)\geq \max_{i \in I,\; a_i\in A[\bar x_\ell]} \rho_F(\bar{x}_\ell-a_i).
		\end{equation}
	Thus, we have
	$$
		\begin{aligned}
			f_k^F(\widetilde{x})=\max\limits_{i \in I}\left(\min _{\ell \in J}\rho_F(\widetilde{x}_\ell-a_i)\right)
			& =\max_{i \in I}\rho_F(\widetilde{x}_{\ell(i)}-a_i) \quad (\text{\rm by }~\eqref{key37})\\
			& =\max_{\ell \in J,\,A[\widetilde x_\ell]\neq \emptyset}\left(\max_{i \in I,\; a_i\in A[\widetilde x_\ell]} \rho_F(\widetilde{x}_{\ell(i)}-a_i)\right) \\
			& =\max_{\ell \in J,\,A[\widetilde x_\ell]\neq \emptyset}\left(\max_{i \in I,\; a_i\in A[\widetilde x_\ell]} \rho_F(\widetilde{x}_{\ell}-a_i)\right) \quad (\text{\rm by }~\eqref{key52})\\
			& = \max_{\ell \in J,\,A[\bar x_\ell]\neq \emptyset}\left(\max_{i \in I,\; a_i\in A[\bar x_\ell]}\rho_F(\widetilde{x}_\ell-a_i)\right) \\
			& \geq \max_{\ell \in J,\,A[\bar x_\ell]\neq \emptyset}\left(\max_{i \in I,\; a_i\in A[\bar x_\ell]}\rho_F(\bar{x}_\ell-a_i)\right) \quad (\text{\rm by }~\eqref{key53})\\
			& = \max_{\ell \in J,\,A[\bar x_\ell]\neq \emptyset}\left(\max_{i \in I,\; a_i\in A[\bar x_\ell]} \rho_F(\bar{x}_{\ell(i)}-a_i)\right) \quad (\text{\rm by }~\eqref{key53})\\
			&=\max_{i \in I}\rho_F(\bar{x}_{\ell(i)}-a_i)\\
			&=\max\limits_{i \in I}\left(\min _{\ell \in J}\rho_F(\bar{x}_\ell-a_i)\right) \quad (\text{\rm by }~\eqref{keycc})\\
			&=f_k^F(\bar{x}),
		\end{aligned}
		$$
		which implies that $\bar{x}=\left(\bar{x}_1, \ldots, \bar{x}_k\right)\in S_k^{F, \, {\rm loc}}$. Note that in the proof above we use the obvious fact that $A=\cup_{\ell\in J}\cup_{a_i\in A[x_\ell]}$ for every $x=(x_1,..,x_k)\in \mathbb R^{dk}$. The proof is now complete.
	\end{proof}
Next, we use Example~\ref{S-compact} to illustrate Theorem~\ref{theo_sufficient}.
 	\begin{example}
		Consider Example~\ref{S-compact}.
	For $\bar x=(\bar x_1,\bar x_2)=(5,30),$ it can be verified that $J_1(\bar x)=\{1\}$, $J_2(\bar x)=\{1\}$, $J_3(\bar x)=\{1\}$, $A[\bar x_1]=\left\{a_1,a_2,a_3\right\}$ and $A[\bar x_2]=\emptyset.$ Moreover, $\bar x_1$ is a global optimal solution to the $1$-center problem defined by the data set  $A[\bar x_1]=\left\{a_1,a_2,a_3\right\}$. Thus, all assumptions of  Theorem~\ref{theo_sufficient} are satisfied. According to this theorem,
 $(\bar x_1,\bar x_2)=(5,30)$ is a local optimal solution to 
problem~\eqref{maxmin} and the local optimal value is~$5$.
 Note that $(\bar x_1,\bar x_2)=(5,30)$ is not a global optimal solution to 
problem~\eqref{maxmin} because the global optimal value is $\frac{1}{2}$, as shown in Example~\ref{S-compact}.
\end{example}

	\begin{remark}
			It should be emphasized that Theorem~\ref{theo_sufficient} may not be necessary, in general,
		for the existence of a local solution to  problem~\eqref{maxminn}, as  demonstrated
		by the following example.
	\end{remark}
	\begin{example}
		Consider problem~\eqref{maxmin} with $d=2, m=3, k=2$, and $$A=\left\{a_1=(0,0),\; a_2=(1,0),\; a_3=(0,1)\right\}.$$
		It can be verified that $\bar x=(\bar x_1,\bar x_2)\in \mathbb{R}^2\times\mathbb{R}^2$, where $\bar x_1=\left(\frac{1}{2},\frac{1}{2}\right)$ and $\bar x_2=\left(-\frac{1}{100},-\frac{1}{100}\right)$, is a
		local optimal solution to problem~\eqref{maxmin}. However, $\bar{x}_2$ is not a global optimal solution to the $1$-center problem defined by the data set   $A[\bar{x}_2]=\{ a_1\}$. Note that $(\bar x_1,\bar x_2)$ is not a local optimal solution to  the multi-source Weber problem studied in \cite{{cuongoptimization}}.
		\end{example}
%%%%%%%%%%%%%%%%%%%%%%%%%%%%%%%%%%%%%%%%%%%%%%%%%%%%%%
%%%%%%%%%%%%%%%%%%%%%%%%%%%%%%%%%%%%%%%%%%%%%%%%%%%%%%%%%%
%%%%%%%%%%%%%%%%%%%%%%%%%%%%%%%%%%%%%%%%%%%%%%%%%%%%%%%%%%%	
\section{Conclusions}
	After introducing the $k$-center problem, we established a solution existence theorem and provided a comprehensive study of the fundamental properties of global solutions, supplemented by several illustrative examples. We then explored the existence of local solutions. The results obtained can be used
to analyze existing solution methods for the $k$-center problem.		
	
	\end{document}